\numberwithin{equation}{section}
\numberwithin{figure}{section}
\theoremstyle{plain}
\newtheorem{thm}{\protect\theoremname}
  \theoremstyle{plain}
  \newtheorem{question}[thm]{\protect\questionname}
  \theoremstyle{definition}
  \newtheorem{defn}[thm]{\protect\definitionname}
  \theoremstyle{plain}
  \newtheorem{prop}[thm]{\protect\propositionname}
  \theoremstyle{definition}
  \newtheorem{example}[thm]{\protect\examplename}
  \theoremstyle{remark}
  \newtheorem{claim}[thm]{\protect\claimname}
  \theoremstyle{remark}
  \newtheorem*{acknowledgement*}{\protect\acknowledgementname}
  \providecommand{\acknowledgementname}{Acknowledgement}
  \providecommand{\claimname}{Claim}
  \providecommand{\definitionname}{Definition}
  \providecommand{\examplename}{Example}
  \providecommand{\propositionname}{Proposition}
  \providecommand{\questionname}{Question}
\providecommand{\theoremname}{Theorem}
\begin{document}
\global\long\def\rank{\operatorname{rank}}

\global\long\def\girth{\operatorname{girth}}

\global\long\def\diam{\operatorname{diam}}

\title{Commuting Graphs of Boundedly Generated Semigroups}

\author{Tomer Bauer and Be'eri Greenfeld}

\address{Department of Mathematics, Bar-Ilan University, Ramat Gan 52900,
Israel}
\begin{abstract}
Araújo, Kinyon and Konieczny \cite{AKK/11} pose several problems
concerning the construction of arbitrary commuting graphs of semigroups.

We observe that every star-free graph is the commuting graph of some
semigroup. Consequently, we suggest modifications for some of the
original problems, generalized to the context of families of semigroups
with a bounded number of generators, and pose related problems.

We construct monomial semigroups with a bounded number of generators,
whose commuting graphs have an arbitrary clique number. In contrast
to that, we show that the diameter of the commuting graphs of semigroups
in a wider class (containing the class of nilpotent semigroups), is
bounded by the minimal number of generators plus two.

We also address a problem concerning knit degree.
\end{abstract}

\thanks{The first author is partially supported by the Bar-Ilan University
President's Doctoral Fellowships of Excellence Program.}
\maketitle

\section{Introduction}

We denote the center of a semigroup $S$ by $Z(S)$. The \textit{commuting
graph} of $S$, denoted by $\Gamma(S)$, is the simple graph whose
vertex set is $S-Z(S)$, where two vertices are connected by an edge
if their corresponding elements in $S$ commute. For example, if $S$
is a commutative semigroup, then $\Gamma(S)$ is the empty graph.
A study of these graphs in relation with certain algebraic structures
can be found, for instance in \cite{AAM/06,AGHM/04,AKK/11,GP/10,Segev/99}.

Araújo, Kinyon and Konieczny \cite{AKK/11} proved that for every
natural number $n\geq2$, there exists a semigroup whose commuting
graph has diameter $n$. They also pose the following question:
\begin{question}[\cite{AKK/11}, Section 6, (4)]
\label{thm:graph-proporties}Can every number $n\geq3$ be the clique
number (girth, chromatic number) of the commuting graph of a semigroup?
\end{question}

All semigroups constructed in this note are assumed to have a zero
element, denoted $0$. We call a semigroup \emph{monomial} if it has
a presentation with only monomial relations (i.e.\ relations of the
form $w=0$).

We now classify which graphs are realizable as commuting graphs of
semigroups, thereby answering Question \ref{thm:graph-proporties}.
\begin{defn}
A vertex in a graph is called a \emph{star} if it is connected to
any other vertex. A graph is called \emph{star-free} if none of its
vertices is a star.

It is obvious that any commuting graph is star-free. The following
result shows that the converse is true as well.
\end{defn}

\begin{prop}
\label{prop:generic-construction}Let $\Gamma=\left(V,E\right)$ be
a star-free graph. Then there exists a (monomial) semigroup $S$ such
that $\Gamma(S)=\Gamma$. Moreover, if $\Gamma$ has $n$ vertices,
then there exists a (monomial) semigroup $S$, with $\Gamma(S)=\Gamma$,
such that $S$ is generated by $n$ elements and its order is $n^{2}+n+1-\left|E\right|$.
\end{prop}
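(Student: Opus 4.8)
The plan is to attach one generator $x_i$ to each vertex $v_i$, to arrange that for $i\neq j$ the generators $x_i,x_j$ commute \emph{only} in the extreme way $x_ix_j=x_jx_i=0$, to make every other element central, and finally to pad the semigroup with a handful of harmless central elements (powers of a single generator) so that the order comes out exactly right.

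Concretely, I would fix $V=\{v_1,\dots,v_n\}$, take letters $x_1,\dots,x_n$, and work inside the free semigroup $F$ on them. Define the set of words
\[
 N=\{x_1,\dots,x_n\}\ \cup\ \{\, x_ix_j:\ 1\le i,j\le n,\ \{v_i,v_j\}\notin E \,\}\ \cup\ \{\, x_1^{k}:\ 3\le k\le |E|+2 \,\};
\]
every diagonal word $x_i^{2}$ is included (a graph has no loops), the third block is empty when $E=\varnothing$, and the one-vertex graph is harmlessly excluded since its vertex is a star. Since $N$ is closed under taking contiguous factors, its complement in $F$ is an ideal, and the corresponding Rees quotient $S$ has underlying set $N\cup\{0\}$ with multiplication ``concatenate, then collapse to $0$ as soon as the result leaves $N$''. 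This exhibits $S$ as a monomial semigroup (its defining relations are $w=0$ for $w$ running over the minimal words outside $N$), generated by $x_1,\dots,x_n$, with $|S|=1+n+(n^{2}-2|E|)+|E|=n^{2}+n+1-|E|$.

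The main work --- and the only genuinely fussy part --- is to show $Z(S)=S\setminus\{x_1,\dots,x_n\}$. First, every $w\in N$ with $|w|\ge 2$ is central: for a generator $x_k$ both $wx_k$ and $x_kw$ have length $\ge 3$, and the only words of length $\ge 3$ in $N$ are the pure powers $x_1^{k}$, so either $w$ is a power of $x_1$ and $k=1$ --- in which case $wx_k=x_kw=x_1^{|w|+1}$ as words --- or else $wx_k=x_kw=0$; the analogous check for products of $w$ with words of length $\ge 2$ and with $0$ is easier, so $w\in Z(S)$. Conversely, and this is the one place where star-freeness is used, each generator $x_i$ is non-central: since $v_i$ is not adjacent to all other vertices there is $j\neq i$ with $\{v_i,v_j\}\notin E$, whence $x_ix_j$ and $x_jx_i$ are two \emph{distinct} words of $N$, so $x_ix_j\neq x_jx_i$ in $S$ and $x_i\notin Z(S)$. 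Hence $S\setminus Z(S)=\{x_1,\dots,x_n\}$.

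Finally I would identify the vertex set of $\Gamma(S)$ with $V$ via $x_i\leftrightarrow v_i$ and note that for $i\neq j$ the vertices $x_i,x_j$ are adjacent in $\Gamma(S)$ iff $x_ix_j=x_jx_i$ in $S$; since these are distinct words of $F$, this happens iff both of them lie outside $N$, i.e.\ (by the definition of $N$) iff $\{v_i,v_j\}\in E$. Thus $\Gamma(S)=\Gamma$, and since this very $S$ already has the asserted order and generating set, the first, qualitative assertion of the proposition comes along for free. The only steps that need care are the center computation above --- in particular verifying that the padding powers $x_1^{k}$ neither fail to be central nor create spurious adjacencies --- together with a routine sanity check that the construction degenerates correctly when $E=\varnothing$ or when $\Gamma$ has isolated vertices.
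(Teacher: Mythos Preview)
Your proof is correct, and the core idea---kill all words of length~$\ge 3$ so that every non-generator becomes central, then use star-freeness to see that each generator is non-central---is exactly the paper's. The difference is in how the exact order $n^{2}+n+1-|E|$ is achieved: the paper's primary construction imposes the \emph{non-monomial} relations $uv=vu$ for each edge $(u,v)\in E$, which identifies $|E|$ pairs of length-two words and gives the count directly; its monomial variant (setting $uv=vu=0$ for edges) has the right commuting graph but order only $n^{2}+n+1-2|E|$. You instead take this monomial variant and pad it with $|E|$ extra central elements $x_1^{3},\dots,x_1^{|E|+2}$. This buys you a single monomial semigroup with the announced order, at the cost of the mildly artificial padding and the small case-check that these powers really are central and do not create new adjacencies; the paper's version is cleaner to write down but, read literally, splits the ``monomial'' and ``exact order'' claims across two constructions.
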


\begin{proof}
Label the vertices of $\Gamma$ to generate a free semigroup with
zero. Consider its quotient semigroup
\[
S=\left\langle v\in V\left|\begin{gathered}v_{1}v_{2}v_{3}=0\ \forall v_{1},v_{2},v_{3}\\
vu=uv\ \forall\left(u,v\right)\in E
\end{gathered}
\right.\right\rangle .
\]
It is clear that the center $Z(S)$ consists of all elements of the
form $vu$. It is now evident why $\Gamma(S)=\Gamma$. Also, an easy
calculation shows that if $\Gamma$ has $n$ vertices then this semigroup
has precisely $n^{2}+n+1-\left|E\right|$ elements.

Note we could set the relations $uv=vu=0$ whenever $\left(u,v\right)\in E$,
resulting in a monomial semigroup with the same commuting graph.
\end{proof}
Let $\rank(\Gamma)$ denote the minimal number of generators for a
semigroup whose commuting graph is $\Gamma$. Note that the construction
in Proposition \ref{prop:generic-construction} only shows that $\rank(\Gamma)\leq\left|V\right|$.
For a semigroup $S$, we call the minimal number of generators the
\emph{rank of $S$}.

Thus, one can modify Question \ref{thm:graph-proporties} and ask
for families of semigroups generated by a bounded number of generators,
yet having arbitrary graph-theoretic invariants. We exhibit such families
with arbitrary clique numbers (and leave open the same question about
girth). 

For a certain class of finite semigroups (which contains the class
of nilpotent semigroups), we show that the diameter of the commuting
graph is bounded by a linear function of the rank.

In Section \ref{sec:knit-degree-3} we provide a counterexample to
another problem from \cite{AKK/11} concerning the notion of knit
degree (defined in the sequel).

\section{Commuting Graphs with Arbitrary Clique Number\label{sec:clique-number}}

Denote the clique number of a graph $\Gamma$ by $\omega(\Gamma)$.
In view of the previous section it is evident that there are semigroups
whose commuting graphs have arbitrary clique number. However, the
semigroups that arise from Proposition \ref{prop:generic-construction}
have rank at least the prescribed clique number, thus unbounded.

To exhibit a family of boundedly generated semigroups with arbitrary
clique number, we first introduce a family of monomial semigroups
$S_{n}$ having arbitrary odd clique number $2n-1$, all of which
are generated by two elements.

Commuting graphs with even clique number are then constructed by the
simple operations of null union of semigroups and join of graphs:
\begin{defn}
Let $S,T$ be two semigroups with zero. Their \emph{null union} $S\bullet T$
is the semigroup $S\cup T$ after identifying together their zeros.
For every pair of elements $s\in S$ and $t\in T$ their product in
$S\bullet T$ is defined to be $st=ts=0$.
\end{defn}

\begin{defn}
The \emph{join} of two simple graphs, denoted by $\vee$, is the graph
union with all the edges that connect the vertices of the first graph
with the vertices of the second graph.
\end{defn}

\begin{prop}
\label{prop:arbitrary-clique-number}Let $n$ be a natural number.
Then

\begin{enumerate}
\item \label{enu:clique-number-odd}The semigroup 
\[
S_{n}=\left<a,b\ |\ a^{n+1}=b^{2}=0,\ aba=ba^{i}b=0\;\forall i\ge1\right>
\]
 satisfies $\omega(\Gamma(S_{n}))=2n-1$.
\item \label{enu:clique-number-even}The semigroup $S_{n}\bullet S_{1}$
satisfies $\omega(\Gamma(S_{n}\bullet S_{1}))=2n$.
\end{enumerate}
\end{prop}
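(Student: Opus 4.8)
The plan is to make the structure of $S_n$ completely explicit, read off its commuting graph, and then deduce part~(2) from a general fact about null unions. Since $S_n$ is a monomial semigroup, its nonzero elements correspond to the words over $\{a,b\}$ avoiding all the forbidden factors $a^{n+1}$, $b^2$, $aba$, $ba^ib$ ($i\ge 1$), multiplication being concatenation reduced to $0$ whenever a forbidden factor appears. I would first note that a nonzero word has at most one occurrence of $b$ (two occurrences force a factor $ba^kb$) and that a unique $b$ cannot be flanked by $a$'s on both sides (that produces $aba$); hence the nonzero elements are exactly $a^i$ ($1\le i\le n$), $b$, $a^ib$ ($1\le i\le n$) and $ba^i$ ($1\le i\le n$), so $|S_n|=3n+2$. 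A short check on the two generators (using the word-reversal anti-automorphism of $S_n$ to halve the work) then gives $Z(S_n)=\{0,a^nb,ba^n\}$: for instance $a\cdot a^nb=a^{n+1}b=0=a^nba=a^nb\cdot a$ and $b\cdot a^nb=ba^nb=0=a^nb\cdot b$, whereas $a^ib$ with $i<n$ fails to commute with $a$ since $a\cdot a^ib=a^{i+1}b\ne 0=a^iba$. Thus $\Gamma(S_n)$ has the $3n-1$ vertices $A=\{a,\dots,a^n\}$, $b$, $X=\{ab,\dots,a^{n-1}b\}$ and $Y=\{ba,\dots,ba^{n-1}\}$.

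Next I would tabulate the commuting relations among these vertices. The decisive computations are: any two elements of $\{b\}\cup X\cup Y$ commute, since each product between them contains $b^2$ or some $ba^kb$ and hence vanishes; this exhibits a clique of size $1+(n-1)+(n-1)=2n-1$, the lower bound. On the other hand $b$ commutes with no power of $a$ (as $ba^i\ne a^ib$); and, since $a^jba^i=0$ and $a^iba^j=0$ always, $a^i$ commutes with $a^jb$ iff $a^{i+j}b=0$, i.e.\ iff $i+j\ge n+1$, and likewise $a^i$ commutes with $ba^j$ iff $i+j\ge n+1$. For the upper bound, let $C$ be a clique. If $b\in C$ then $C\subseteq\{b\}\cup X\cup Y$, so $|C|\le 2n-1$. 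If $b\notin C$ and $C\cap A=\emptyset$ then $|C|\le|X\cup Y|=2n-2$. If $b\notin C$ and $m:=\min\{i:a^i\in C\}$ exists, then every element $a^jb$ and every element $ba^j$ of $C$ must satisfy $j\ge n+1-m$, so $|C\cap X|\le m-1$ and $|C\cap Y|\le m-1$, while $|C\cap A|\le n-m+1$; hence $|C|\le(n-m+1)+2(m-1)=n+m-1\le 2n-1$ since $m\le n$. Therefore $\omega(\Gamma(S_n))=2n-1$.

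For part~(2) I would first record the general fact that $\Gamma(S\bullet T)=\Gamma(S)\vee\Gamma(T)$ for semigroups with zero: the centre of $S\bullet T$ is $Z(S)\cup Z(T)$ with the zeros identified, because a nonzero element of $S$ automatically commutes with all of $T$ and so is central in $S\bullet T$ precisely when it is central in $S$; consequently the vertex set of $\Gamma(S\bullet T)$ is the disjoint union of those of $\Gamma(S)$ and $\Gamma(T)$, edges within each part are unchanged, and every cross pair is an edge since its product is $0$. As a clique of a join is the union of a clique from each side, $\omega(G\vee H)=\omega(G)+\omega(H)$. Applying this with $T=S_1$, and observing that $S_1=\{0,a,b,ab,ba\}$ with $Z(S_1)=\{0,ab,ba\}$, so that $\Gamma(S_1)$ consists of the two non-adjacent vertices $a,b$ and $\omega(\Gamma(S_1))=1$, yields $\omega(\Gamma(S_n\bullet S_1))=(2n-1)+1=2n$.

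The first and third steps are routine: normal forms for monomial semigroups, and an elementary lemma about joins of graphs. The real content — the step I expect to demand the most care — is the upper bound $\omega(\Gamma(S_n))\le 2n-1$. One has to recognise that the only possible way to beat the obvious clique $\{b\}\cup X\cup Y$ is to discard $b$ in exchange for high powers of $a$, and then verify through the condition $i+j\ge n+1$ that such an exchange never produces a net gain; the bookkeeping in the parameter-$m$ case, where the bounds on $|C\cap A|$, $|C\cap X|$ and $|C\cap Y|$ must be combined just right, is where an error would most easily slip in.
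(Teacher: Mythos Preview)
Your proof is correct and follows essentially the same route as the paper: you enumerate the $3n+2$ elements of $S_n$, identify the centre $\{0,a^nb,ba^n\}$, exhibit the clique $\{b\}\cup X\cup Y$ of size $2n-1$, and bound an arbitrary clique by the same case split on whether $b$ or some minimal power $a^m$ lies in it (your inequality $|C|\le(n-m+1)+2(m-1)=n+m-1$ is exactly the paper's $|C'|\le(3n-1)-[1+(i-1)+2(n-i)]$ rewritten). Part~(2) is likewise identical, via $\Gamma(S\bullet T)=\Gamma(S)\vee\Gamma(T)$ and $\omega(\Gamma(S_1))=1$.
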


\begin{example}
The graph $\Gamma(S_{2})$ is \begin{center}
\begin{tikzpicture}
  \tikzstyle{vertex}=[circle,draw,thick,fill=white,minimum size=20pt, inner sep=0pt]

  \node(triangle)[regular polygon, regular polygon sides=3, minimum height=2cm, rotate=-90] at (5,0) {};

  \node[vertex] (1) at (triangle.corner 1) {$b$};
  \node[vertex] (2) at (triangle.corner 2) {$ba$};
  \node[vertex] (3) at (triangle.corner 3) {$ab$};
  \node[vertex] (4) at (0,0) {$a$};
  \node[vertex] (5) at (1.5,0) {$a^{2}$};
  \path[draw,thick]
    % The triangle clique
    (1) edge node {} (2)
    (1) edge node {} (3)
    (2) edge node {} (3)
    % The powers of $a$ clique
    (4) edge node {} (5)
    % The rest of the edges
    (5) edge node {} (2)
    (5) edge node {} (3)
    ;
\end{tikzpicture}
\end{center}

and the graph $\Gamma(S_{3})$ is \begin{center}
\begin{tikzpicture}
  \tikzstyle{vertex}=[circle,draw,thick,fill=white,minimum size=20pt, inner sep=0pt]

  \node(triangle)[regular polygon, regular polygon sides=3, minimum height=2cm, rotate=30] at (0,0) {};
  \node(pentagon)[regular polygon, regular polygon sides=5, minimum height=2.5cm, rotate=-90] at (5,0) {};

  \node[vertex] (1) at (pentagon.corner 1) {$b$};
  \node[vertex] (2) at (pentagon.corner 2) {$ba$};
  \node[vertex] (3) at (pentagon.corner 3) {$ba^{2}$};
  \node[vertex] (4) at (pentagon.corner 4) {$a^{2}b$};
  \node[vertex] (5) at (pentagon.corner 5) {$ab$};
  \node[vertex] (6) at (triangle.corner 1) {$a$};
  \node[vertex] (7) at (triangle.corner 2) {$a^{2}$};
  \node[vertex] (8) at (triangle.corner 3) {$a^{3}$};
  \path[draw,thick]
    % The pentagon clique
    (1) edge node {} (2)
    (1) edge node {} (3)
    (1) edge node {} (4)
    (1) edge node {} (5)
    (2) edge node {} (3)
    (2) edge node {} (4)
    (2) edge node {} (5)
    (3) edge node {} (4)
    (3) edge node {} (5)
    (4) edge node {} (5)
    % The powers of $a$ clique
    (6) edge node {} (7)
    (6) edge node {} (8)
    (7) edge node {} (8)
    % The rest of the edges
    (8) edge [bend left=30] node {} (2)
    (7) edge [bend right=15] node {} (3)
    (8) edge node {} (3)
    (7) edge [bend right=10] node {} (4)
    (8) edge node {} (4)
    (8) edge [bend right=30] node {} (5)
    ;
\end{tikzpicture}
\end{center}
\end{example}

\begin{proof}[Proof of \eqref{enu:clique-number-odd}]
Clearly $S_{n}=\{0,b,a,a^{2},\cdots,a^{n},ab,a^{2}b,\cdots,a^{n}b,ba,ba^{2},\cdots,ba^{n}\}$,
so $S_{n}$ has $3n+2$ elements. We calculate $Z(S_{n})=\{0,a^{n}b,ba^{n}\}$,
so $|\Gamma(S_{n})|=3n-1$. Take 
\[
C=\{a^{i}b\}_{1\leq i\leq n-1}\cup\{ba^{j}\}_{1\leq j\leq n-1}\cup\{b\}
\]
which is clearly a clique of size $2n-1$, and so $\omega(\Gamma(S_{n}))\ge2n-1$.

Now, let $C'$ be a clique of $\Gamma(S_{n})$. If $b\in C'$, then
$a^{i}\notin C'$ for all $i$ so $|C'|\leq2n-1$. If $a^{i}\in C'$
with the minimal such $i$, then $b,a,\cdots,a^{i-1},a^{j}b,ba^{j}\notin C'$
for all $j\leq n-i$, which are together $2n-i$ vertices, so 
\[
|C'|\leq(3n-1)-[1+(i-1)+2(n-i)]=n+i-1\leq2n-1
\]
If both $b\notin C'$ and $a^{i}\notin C'$ for all $i$, then $\left|C'\right|\le2n-2$.
This completes the proof of the first claim that $\omega(\Gamma(S_{n}))=2n-1$.
\end{proof}
\begin{proof}[Proof of \eqref{enu:clique-number-even}]
In order to construct commuting graphs with arbitrary even clique
number, we shall need the following properties of join of graphs and
null union of semigroups.

\begin{enumerate}
\item Let $G,H$ be two graphs. Then $\omega(G\vee H)=\omega(G)+\omega(H)$.
\item Let $S,T$ be two semigroups with zero. Then $\Gamma(S\bullet T)\cong\Gamma(S)\vee\Gamma(T)$
(naturally), as $Z(S\bullet T)=Z(S)\bullet Z(T)$.
\item Let $S,T$ be two semigroups with zero. Then $\omega(\Gamma(S\bullet T))=\omega(\Gamma(S))+\omega(\Gamma(T))$.
\end{enumerate}
Thus, to exhibit boundedly generated semigroups whose commuting graph
has arbitrary even clique number, it is enough to consider
\[
S_{1}\coloneqq\left<a,b|a^{2}=b^{2}=0,\!aba=bab=0\right>=\{0,a,b,ab,ba\}.
\]
Clearly we have that $Z(S_{1})=\{0,ab,ba\}$ and $\Gamma(S_{1})\cong\overline{K}_{2}$,
the edgeless graph with two vertices.

By the above properties and the fact that $\omega(\Gamma(S_{1}))=1$
one obtains that $\omega(\Gamma(S_{n}\bullet S_{1}))=2n$ and that
$S_{n}\bullet S_{1}$ can be generated by $4$ elements.

Notice that the commuting graph of the semigroup $S'_{n}\coloneqq\underbrace{S_{1}\bullet\cdots\bullet S_{1}}_{n}$
has clique number $n$. However, the semigroup $S'_{n}$ can not be
generated by less than $2n$ elements, which is obviously unbounded.
\end{proof}

\section{Diameter of Commuting Graphs}

Recall that the diameter of a (connected) graph is the maximal distance
between two vertices. In \cite{AKK/11}, the authors introduce constructions
of semigroups with connected commuting graphs having arbitrary diameter.
The constructions there have unbounded rank that grows linearly with
the diameter. By Proposition \ref{prop:generic-construction} it is
again possible to obtain such examples, now with rank equal to the
number of vertices (hence bigger than the prescribed diameter).

Proposition \ref{prop:arbitrary-clique-number} exhibits commuting
graphs of monomial semigroups with at most $4$ generators having
arbitrary clique number. For a wider family of semigroups, we show
next that the diameter of their commuting graphs is effectively bounded
by the rank of the semigroups.

We now show that there is a tight connection between the diameter
and the rank of a semigroup in a wide class of semigroups. For a semigroup
$S$ and $m\in\mathbb{N}$ denote 
\[
S^{m}\coloneqq\left\{ a_{1}a_{2}\cdots a_{m}|a_{i}\in S\ \forall i\right\} \subseteq S
\]
In particular we have $S^{m}\subseteq S^{m-1}\subseteq\dots\subseteq S$.
\begin{prop}
\label{prop:Diameter}Let $S$ be a semigroup generated by $d$ elements.
Suppose it has a non-central ideal $I$ such that $IS,SI\subseteq Z(S)$.
If $\Gamma(S)$ is connected, then its diameter $D$ satisfies $D\leq d+2$.
\end{prop}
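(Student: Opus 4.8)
The plan is to show that a single well-chosen vertex of $\Gamma(S)$ is adjacent to all but boundedly many of the other vertices, the exceptional set being controlled by $d$. The technical core is the observation that $S^{2}$ centralises $I$: for all $a,b\in S$ and $y\in I$ we have $(ab)y=y(ab)$. This is a two-line computation: since $by\in SI\subseteq Z(S)$ and $ya\in IS\subseteq Z(S)$, both of these elements are central, so
\[
(ab)y=a(by)=(by)a=b(ya)=(ya)b=y(ab).
\]
I regard spotting and proving this as the real content of the argument; everything after it is bookkeeping.

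Now, since $I$ is non-central, fix $x\in I\setminus Z(S)$; this is a vertex of $\Gamma(S)$. By the observation above, every vertex of $\Gamma(S)$ lying in $S^{2}$ commutes with $x$ and is therefore joined to $x$ by an edge (unless it equals $x$). On the other hand, writing $S=\langle g_{1},\dots,g_{d}\rangle$, every element of $S$ is a product $g_{i_{1}}\cdots g_{i_{m}}$ with $m\ge1$, and those with $m\ge2$ lie in $S^{2}$; hence $S\setminus S^{2}\subseteq\{g_{1},\dots,g_{d}\}$, so the set $H$ of vertices of $\Gamma(S)$ not lying in $S^{2}$ has at most $d$ elements. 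Thus $x$ is adjacent to every vertex outside the small set $H$.

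Finally I would estimate the diameter. Given vertices $u,v$, take a shortest path $\pi$ in $\Gamma(S)$, which exists by connectedness. If $\pi$ lies entirely in $H$, its length is at most $|H|-1\le d-1$. Otherwise let $p$ and $q$ be the first and last vertices of $\pi$ lying in $S^{2}$. The part of $\pi$ strictly before $p$ and the part strictly after $q$ consist of distinct vertices, all lying in $H$, and these two parts are disjoint; hence together they contain at most $|H|\le d$ vertices. Since $p$ and $q$ are both adjacent to $x$, we may splice out the stretch of $\pi$ between $p$ and $q$ and insert the two-edge walk $p,x,q$, obtaining a walk from $u$ to $v$ of length at most $d+2$; the remaining small special cases ($p=q$, or $x$ itself already on $\pi$, or $u$ or $v$ equal to $x$) only give the same bound or better. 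Hence $D\le d+2$.

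The step to watch is precisely this count. Bounding $d(u,x)$ and $d(x,v)$ separately would only give $D\le 2d+2$, so one must genuinely use that the two $H$-segments of a single shortest path are vertex-disjoint — hence of combined size at most $|H|\le d$ — rather than bounding each of them by $|H|$.
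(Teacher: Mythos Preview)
Your proof is correct and follows essentially the same route as the paper's: the same centrality computation showing that $S^{2}$ commutes with $I$, the same hub vertex in $I\setminus Z(S)$, and the same bound $|S\setminus S^{2}|\le d$. The only cosmetic difference is in the final count---you splice a single shortest $u$--$v$ path at its first and last $S^{2}$-vertices, whereas the paper takes separate shortest paths from each endpoint into $S^{2}$ and invokes pigeonhole when their combined length exceeds $d$; both packagings yield $D\le d+2$.
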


\begin{proof}
Pick $u\in I\setminus Z(S)$. We claim that for every $v\in S^{2}$,
we have that $uv=vu$. Indeed, write $v=t_{1}t_{2}$, and compute
\[
uv=ut_{1}t_{2}=t_{2}ut_{1}=t_{1}t_{2}u=vu
\]
since $ut_{1}\in IS$ and $t_{2}u\in SI$ are central.

Let $x,y\in\Gamma(S)$ be two vertices. If $x,y\in S^{2}$ then a
path of length (at most) $2$ can be found between them, as $u$ commutes
with both of them. Suppose only one of $x,y$, say $x$, is not contained
in $S^{2}$ (but $y\in S^{2}$). Note that $\left|S\setminus S^{2}\right|\le d$,
because $S$ is generated by $d$ elements. Then by connectivity a
shortest path can be found between $x$ and some vertex $z\in S^{2}$.
As the path is minimal, its length does not exceed $d$, which is
the number of generators. We can now concatenate this path to $z-u-y$,
resulting in a path of length at most $d+2$, connecting $x$ and
$y$.

We now assume that both $x,y\notin S^{2}$. We can find shortest paths
$\gamma_{x},\gamma_{y}$ from $x,y$ to some $x',y'\in S^{2}$, respectively
and denote the lengths of these paths by $d_{x},d_{y}$. As before,
minimality implies that $\gamma_{x}$ consists of precisely $d_{x}$
vertices from $S\setminus S^{2}$ (and likewise $\gamma_{y}$ consists
of $d_{y}$ such vertices). But $d_{x}+d_{y}\leq d$, for otherwise
the pigeonhole principle would imply that some vertex from $S\setminus S^{2}$
appears in both $\gamma_{x},\gamma_{y}$, so a path between $x$ and
$y$ can be found of length at most $d-1$. Hence the path $x-\dots-x'-u-y'-\dots-y$
obtained by concatenating $\gamma_{x},x'-u-y',\gamma_{y}$ has length
at most $d+2$.
\end{proof}
Let $S$ be a non-commutative nilpotent semigroup of nilpotency degree
$c$, and let $m$ be the minimal exponent for which $S^{m}\subseteq Z(S)$.
Then a possible ideal satisfying the premise of the previous Proposition
is $I=S^{m-1}$. The nilpotency of $S$ guarantees $m\le c$ because
$S^{c}=\left\{ 0\right\} \subseteq Z(S)$. As mentioned, if $S$ is
commutative (e.g.\ $c\le2$), then its commuting graph is empty and
the claim holds trivially.

\section{A Semigroup with Knit Degree 3\label{sec:knit-degree-3}}

Another question that was posed in \cite{AKK/11} is related to the
notion of knit degree:
\begin{defn}[\cite{AKK/11}]
Let $S$ be a semigroup. A path $a_{1}-a_{2}-\dots-a_{m}$ in $\Gamma(S)$
is called a \emph{left path} (abbreviated in \cite{AKK/11} as $l$-path)
if $a_{1}\ne a_{m}$ and $a_{1}a_{i}=a_{m}a_{i}$ for every $1\le i\le m$.
If $\Gamma(S)$ contains a left path, then the \emph{knit degree}
of $S$ is defined to be the length of a shortest left path in $\Gamma(S)$.
\end{defn}

For $n=2$ and every $n\ge4$ a band with knit degree $n$ was constructed
in \cite{AKK/11}. In \cite[Section~6(1)]{AKK/11} it was guessed
that a semigroup with knit degree $3$ does not exist.%
\begin{comment}
Can we construct a band with knit degree $3$?
\end{comment}

We now provide a counterexample.%
\begin{comment}
If we add the relations $x_{i}^{2}=x_{i}$ and $x_{i}x_{j}x_{i}x_{j}=x_{i}x_{j}$
will we get a band with knit degree $3$?
\end{comment}
\begin{example}
The following semigroup has knit degree $3$:
\[
S=\left\langle x_{1},x_{2},x_{3},x_{4}|R\right\rangle 
\]
with the relations
\[
R=\left\{ \begin{array}{c}
x_{i}x_{j}x_{k}=0\ \forall i,j,k\\
x_{1}^{2}=x_{4}x_{1}\\
x_{4}^{2}=x_{1}x_{4}\\
x_{1}x_{2}=x_{4}x_{2}=x_{1}x_{3}=x_{4}x_{3}=x_{2}x_{1}=x_{2}x_{3}=x_{3}x_{2}=x_{3}x_{4}=0
\end{array}\right\} 
\]

One may verify that the graph $\Gamma(S)$ is a path graph on four
vertices: \begin{center}
\begin{tikzpicture}
  \tikzstyle{vertex}=[circle,draw,thick,fill=white,minimum size=20pt, inner sep=0pt]

  \node[vertex] (1) at (0,0) {$x_{1}$};
  \node[vertex] (2) at (2,0) {$x_{2}$};
  \node[vertex] (3) at (4,0) {$x_{3}$};
  \node[vertex] (4) at (6,0) {$x_{4}$};
  \path[draw,thick]
    (1) edge node {} (2)
    (2) edge node {} (3)
    (3) edge node {} (4)
    ;
\end{tikzpicture}
\end{center}The relations (except for $x_{i}x_{j}x_{k}=0$) ensure that $x_{1}-x_{2}-x_{3}-x_{4}$
is a left path.

We are left to show that a shorter left path does not exist in the
graph. There is an automorphism transposing $x_{1}\leftrightarrow x_{4}$
and $x_{2}\leftrightarrow x_{3}$, so it suffices to check that $x_{1}-x_{2}$,
$x_{2}-x_{3}$ and $x_{1}-x_{2}-x_{3}$ are not left paths, which
is an easy exercise.
\end{example}

\section{Further Problems}
\begin{question}
Can $\Gamma(S)$ be connected with arbitrarily large diameter for
$S$ with bounded rank?
\end{question}

We would like to remark that in general, there is no uniform bound
on the rank of finite graphs. In fact, we have:
\begin{claim}
Let $G$ be a $(2n-2)$-regular graph on $2n$ vertices. Then $\rank\left(G\right)=2n$.
\end{claim}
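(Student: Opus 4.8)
The plan is to first identify the graph. A $(2n-2)$-regular graph $G$ on $2n$ vertices is exactly the complement of a perfect matching: each vertex is non-adjacent to precisely one other vertex, so, writing $v^{*}$ for the unique non-neighbour of $v$, the map $v\mapsto v^{*}$ is a fixed-point-free involution of $V(G)$ (equivalently, $G$ is the cocktail party graph $K_{n\times 2}$). Since $G$ is star-free and has $2n$ vertices, Proposition \ref{prop:generic-construction} immediately gives $\rank(G)\le 2n$, so the whole content of the claim is the lower bound $\rank(G)\ge 2n$: I must show that every semigroup $S$ with $\Gamma(S)=G$ needs at least $2n$ generators.

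The key step is a factorization lemma: if $\Gamma(S)=G$ and $x\in S\setminus Z(S)$ is written as $x=yz$ with $y,z\in S$, then $y=x$ or $z=x$. To prove it, I would use centralizers $C_S(w):=\{t\in S: tw=wt\}$ and note $C_S(y)\cap C_S(z)\subseteq C_S(x)$, since an element commuting with both $y$ and $z$ commutes with $yz=x$. Because $x\notin Z(S)$ is a vertex of $G$ with unique non-neighbour $x^{*}$, one has $S\setminus C_S(x)=\{x^{*}\}$, and $x^{*}\notin Z(S)$. Hence $x^{*}\notin C_S(y)\cap C_S(z)$, so $x^{*}$ fails to commute with $y$ or with $z$; say with $y$. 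Then $y\notin Z(S)$, so $y$ is a vertex of $G$ which is a non-neighbour of $x^{*}$; but the unique non-neighbour of $x^{*}$ is $x$ (the non-neighbour relation on $V(G)$ being an involution), so $y=x$. The symmetric case gives $z=x$.

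Granting the lemma, I claim that no non-central element of $S$ is a product of elements different from itself. Indeed, if $x\in S\setminus Z(S)$ and $x=a_{1}\cdots a_{k}$ with every $a_{i}\ne x$, then by induction on $k$: the case $k=1$ is absurd, and for $k\ge 2$ the lemma applied to $x=a_{1}\cdot(a_{2}\cdots a_{k})$ forces $a_{1}=x$ (excluded) or $a_{2}\cdots a_{k}=x$, the latter being a shorter such product. Consequently, for any generating set $A$ of $S$, every one of the $2n$ non-central elements of $S$ must lie in $A$ (otherwise it would be a product of generators all different from it). Therefore $|A|\ge 2n$, so $\rank(S)\ge 2n$; combined with the upper bound this gives $\rank(G)=2n$.

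I expect the only delicate point to be setting up the factorization lemma correctly: one must check that $x^{*}$ is non-central, that $C_S(x)$ omits exactly the single element $x^{*}$, and that the non-neighbour relation on $V(G)$ is a genuine fixed-point-free involution, so that ``$y$ is a non-neighbour of $x^{*}$'' really does pin down $y=x$. Everything after the lemma is a short formal deduction, and the case $n=1$ (where $G=\overline{K}_{2}$) is covered by the same argument.
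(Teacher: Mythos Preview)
Your proof is correct, but it takes a longer route than the paper's. Both arguments rest on the same structural fact about $G$ (each vertex has a unique non-neighbour, and this pairing is an involution), and both conclude that every generating set of $S$ must contain all $2n$ vertices. The difference is in how that conclusion is reached.

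The paper argues directly: if a generating set $A$ omits some vertex $v$, then $u=v^{*}$ is adjacent to every non-central element of $A$ and of course commutes with every central element of $A$; hence $u$ commutes with every generator, and since the centralizer of an element is closed under products, $u\in Z(S)$, a contradiction. This is a two-line argument using only ``commutes with all generators $\Rightarrow$ central''.

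You instead establish a stronger intermediate statement, the factorization lemma $x=yz\Rightarrow y=x$ or $z=x$ for non-central $x$, and then deduce irreducibility of each vertex. This is perfectly valid and the lemma is a nice structural fact in its own right, but it is more machinery than the claim requires: you pass through centralizer containments $C_S(y)\cap C_S(z)\subseteq C_S(x)$ and an induction on word length, whereas the paper needs neither. In short, your argument is sound and arrives at the same destination; the paper's is simply shorter.
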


\begin{proof}
By the generic construction in Proposition \ref{prop:generic-construction},
we have that $\rank\left(G\right)\le2n$. To show that $\rank\left(G\right)=2n$,
consider any subset $X$ of at most $2n-1$ vertices of $G$. Then
$X$ lacks some vertex $v$, so there exists a vertex $u$ in $G$
(the one not connected to $v$) which is connected to all vertices
of $X$ except $u$ itself. Let $S$ be a semigroup such that $\Gamma(S)=G$.
Suppose that the elements of $X$ plus some (perhaps none) elements
of $Z(S)$ generate $S$. Then $u$ must lie in $Z(S)$. We conclude
that $\rank\left(G\right)=2n$.
\end{proof}
Following that, we restate another question that was posed in \cite{AKK/11}.
\begin{question}
Is there a family of graphs with bounded rank and unbounded girth?
\end{question}

We remark that for the class of semigroups satisfying the condition
in Proposition \ref{prop:Diameter}, the answer to the above question
is negative. 

It is natural to specialize and ask about $C_{n}$, the cyclic graph
with $n$ vertices.
\begin{question}
\label{conj:rank-3}What is the rank of $C_{n}$?
\end{question}

For example, here is an argument showing that the rank of $C_{4}$
is $4$. Clearly every generating subset must contain at least $2$
non-central generators, say $x_{1}$ and $x_{2}$. They must lie on
non-adjacent vertices of $C_{4}$. If the rank were $2$, then every
other vertex would have corresponded to a central element (as it commutes
with both $x_{1}$ and $x_{2}$), a contradiction. Hence, there must
be at least $3$ generators, but then the third generator again commutes
with all generators, so $4$ generators are needed. A similar reasoning
shows that the rank of $C_{5}$ is at least $3$.
\begin{acknowledgement*}
We are grateful to Stuart Margolis, Michael Schein, Uzi Vishne and
two anonymous referees for helpful comments.
\end{acknowledgement*}

\bibliographystyle{plain}
\bibliography{commuting_graphs}

\end{document}